\renewcommand{\thetheoremName}
\newtheorem*{thmain}{Main Theorem}
\newtheorem*{open}{Open question}
\newtheorem{proposition[[]]}[theoremName]{Proposition G}
\newtheorem{theorem}{Theorem}[section]
\newtheorem{proposition}[theorem]{Proposition}
\newtheorem{corollary}[theorem]{Corollary}
\theoremstyle{definition}
\newtheorem{remark}{Remark}
\numberwithin{equation}{section}
\newcommand{\Vol}{\operatorname{Vol}}
\newcommand{\erre}{\mathbb{R}}
\newcommand{\He}{\mathcal{H}}
\newcommand{\Q}{\mathcal{Q}}
\newcommand{\E}{\mathcal{E}}
\begin{document}

\title[Large time behavior of the on-diagonal heat kernel]{Large time behavior of the on-diagonal heat kernel for minimal submanifolds with polynomial volume growth}

\author{Vicent Gimeno}      
\address{Department of Mathematics-INIT, Universitat Jaume I, Castell\'o de la Plana, Spain                        
}
\email{gimenov@uji.es}

\thanks{Work partially supported by DGI grant MTM2010-21206-C02-02.}


\begin{abstract}
In this paper we provide a lower bound for the long time on-diagonal heat kernel of minimal submanifolds in a Cartan-hadamard ambient manifold assuming that the submanifold is of polynomial volume growth. In particular cases, that lower bound is  related with the number of ends of the submanifold. 
\keywords{heat kernel \and minimal submanifold \and Cartan-Hadamard \and volume growth \and number of ends}
 \subjclass{35P15}
\end{abstract}

\maketitle

\section{Introduction}\label{intro}
Let $M^m$ be a $m$-dimensional minimally immersed submanifold into a simply connected  ambient manifold $N^n$ with sectional curvatures $K_N$ bounded from above by $K_N\leq 0$. S. Markvorsen proved in \cite{Mar} -following \cite{Ch-Li-Yau}-  that  the  heat kernel $\He$ of $M^m$ is bounded from above by the heat kernel $\He^{m,0}$ of the Euclidean space $\erre^m$, namely:
\begin{equation}\label{Mar}
\He(t,x,y)\leq \He^{m,0}(t,r_x(y))=\frac{1}{\left(4\pi t\right)^{\frac{m}{2}}}e^{-\frac{\left(r_x(y)\right)^2}{4t}},
\end{equation}
being $r_x(y)$ the distance in $N$ from $x$ to $y$.
In particular for the on-diagonal heat kernel $\He(t,x,x)$ of $M^m$ one can state that 
\begin{equation}\label{upper}
\left(4\pi t\right)^\frac{m}{2}\He(t,x,x)\leq 1.
\end{equation}

This paper deals with lower bounds to the on-diagonal heat kernel assuming certain restriction on the volume growth. In  order to define that appropriate behavior on the growth of the extrinsic volume, recall that given a minimal submanifold $M^m$ properly immersed in a Cartan-Hadamard manifold $N$ with sectional curvatures $K_N$ bounded from above by $K_N\leq 0$ and denoting by $\omega_m$ the volume of a radius one geodesic ball in $\erre^m$ and by $B_R^N(p)$ the geodesic ball in $N$ of radius $R$ centered at $p$, by the monotonicity formula (see for instance \cite[theorem 2.6.9]{MP-2012} and \cite{Palmer}) for any point $p\in M^m$  the function 
\begin{equation}\label{monotonicity}
\Q(R)=\frac{\Vol(M^m\cap B_R^N(p))}{\omega_m R^m},
\end{equation}
is a non decreasing function. Throughout this paper  a complete minimal submanifold properly immersed in a Cartan-hadamard ambient manifold is called a minimal submanifold of \emph{polynomial volume growth} if there exists a constant $\E$ depending on $M^m$ such that:
\begin{equation}\label{def-poly}
\lim_{R\to\infty}\Q(R)\leq \E<\infty.
\end{equation}
Under such volume growth behavior we can state the behavior of the long time asymptotic for the on-diagonal heat kernel by the main theorem of this paper. The main theorem makes use of the following constant $C_m$ depending only on the dimension $m$ of the submanifold 
\begin{equation}
C_m:=\frac{\Gamma\left(\frac{m}{2},2 \left(\frac{m}{2}\Gamma\left(\frac{m}{2}\right)\right)^{\frac{2}{m}}\right)}{\Gamma(\frac{m}{2})},
\end{equation}
where $\Gamma(z)$ and 
$\Gamma(z_1,z_2)$ in the above expression denote the gamma function and the incomplete gamma function respectively, i.e,
$$
\begin{aligned}
\Gamma(z):=&\int_{0}^\infty t^{z-1}e^{-t}dt.\\
\Gamma(z_1,z_2):=&\int_{z_2}^\infty t^{z_1-1}e^{-t}dt.
\end{aligned}
$$
For minimal submanifolds with an extrinsic volume growth controlled by the above constant $C_m$ we can state the main result of this paper:
\begin{thmain}
Let $M^m$ be a complete $m$-dimensional submanifold properly immersed in a simply connected  ambient manifold $N$ with sectional curvatures $K_N$ bounded from above by $K_N\leq 0$. Suppose that $M^m$ is of polynomial volume growth, and that
\begin{equation}
\E<\frac{1}{C_m},
\end{equation}
Then, the heat kernel $\He$ of $M^m$ satisfies
\begin{equation}\label{result}
\frac{\left(1-\E C_m\right)^2}{\E}\leq\limsup_{t\to\infty}\left(4\pi t\right)^\frac{m}{2}\He(t,x,x)\leq 1.
\end{equation} 
\end{thmain}

\begin{figure}
\begin{center}
\includegraphics[width=30mm]{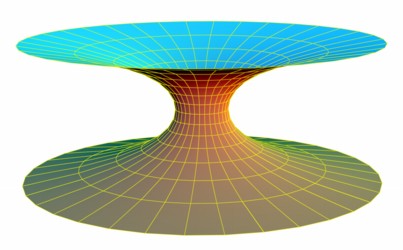} \quad \includegraphics[width=30mm]{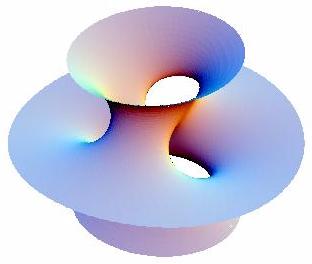} \quad \includegraphics[width=30mm]{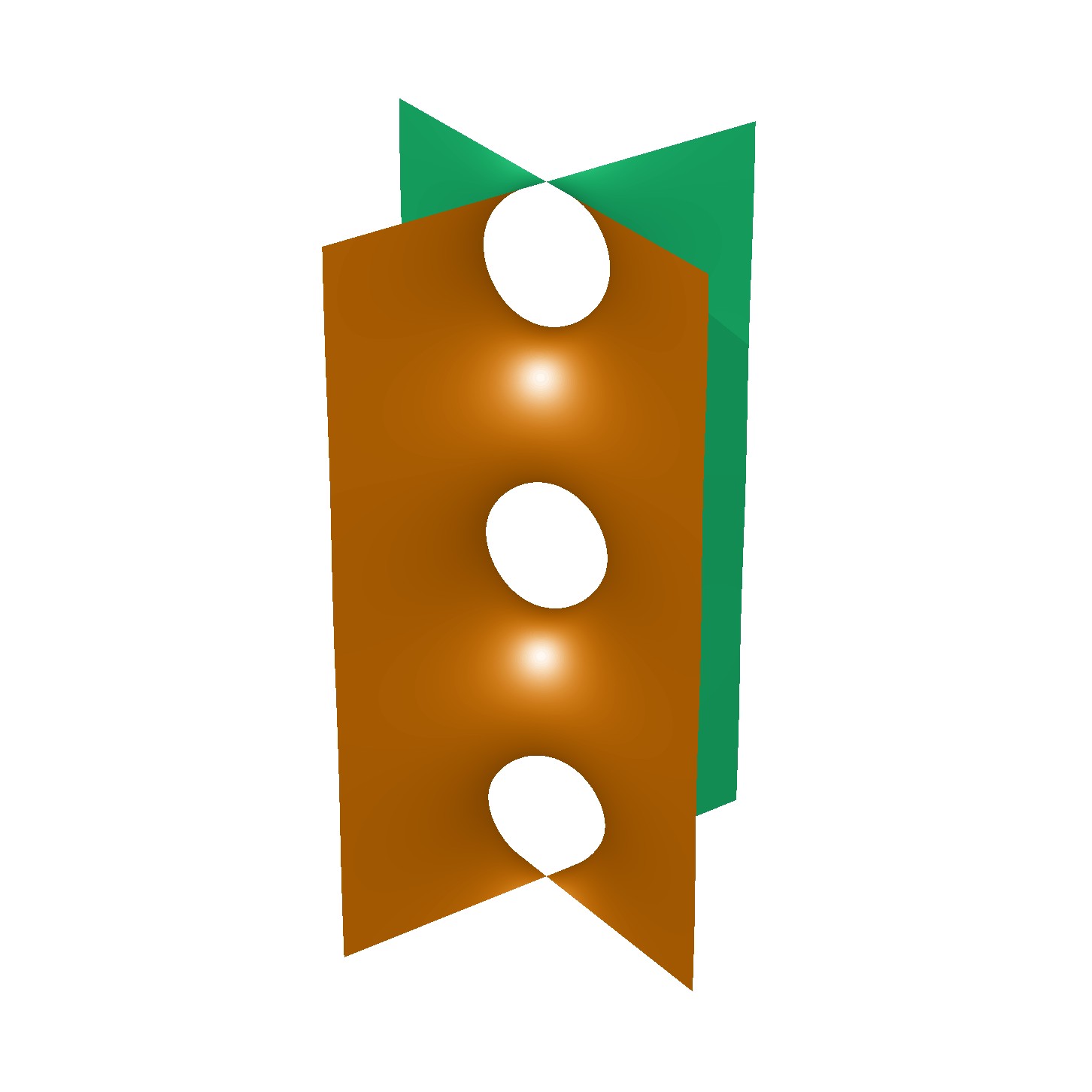}
\end{center}
\caption{The catenoid, the Costa surface and the Scherk singly periodic surface are examples of minimal surfaces immersed in $\erre^3$ with polynomial volume growth which is equivalent to quadratic area growth when the submanifold is a surface. }
\label{example}
\end{figure}

 It is not hard to find examples of complete minimal submanifolds properly and minimally immersed in a Cartan-Hadamard ambient manifold with polynomial volume growth. Indeed, for a complete minimal surface embedded in $\erre^3$, by a well known result  (see \cite{Oss,JM} and introduction in \cite{GP}), if the surface has finite total curvature then the surface has polynomial volume growth (quadratic area growth) and the constant $\E$ given in equation (\ref{def-poly}) is equal to the number of ends of the surface. This is the case of the catenoid or the Costa surface (with $\E=2$ for the catenoid and $\E=3$ for the Costa surface). It is also known  that there exist other surfaces with quadratic area growth but without finite total curvature and even without finite topological type. An example of that kind of surface is the Scherk singly periodic surface (see introduction in \cite{Meeks2007}) which has $\E=2$.

Since 
$$
C_2\sim 0.14 \quad \frac{1}{C_2}\sim 7.39, 
$$  
we can apply the main theorem to the catenoid, the Costa and the Scherk surface, obtaining
$$
\frac{\left(1-0.28\right)^2}{2}\leq \limsup_{t\to\infty}\left(4\pi t\right)\He(t,x,x)\leq 1,
$$
for the catenoid and the Scherk singly periodic surface, and
$$
\frac{\left(1-0.41\right)^2}{3}\leq \limsup_{t\to\infty}\left(4\pi t\right)\He(t,x,x)\leq 1,
$$
for the Costa surface.

As we have shown,  there are several examples where the volume growth is related with the number of ends of the submanifold.  In fact, the following theorem allow us to achieve inequality (\ref{def-poly}) under certain decay of the norm of the second fundamental form and to give a topological meaning to $\lim_{R\to\infty}\Q(R)$

\begin{theorem}[see  theorem 2.2 of \cite{Che4} and \cite{GPGap}] Let $M^m$ be an $m-$dimensional complete immersed minimal submanifold in $\erre^n$ which satisfies
\begin{equation}
\lim_{R\to\infty}\underset{r(x)\geq R}{\sup_{x\in M^m}}r(x)\Vert A\Vert (x)=0,
\end{equation}
where $A$ denotes the second fundamental form. Then, the number of ends $\E\left(M^m\right)$ of $M^m$ is given by:
\begin{equation}
\lim_{R\to\infty}\Q(R)=\E(M^m)
\end{equation}
provided either of the following two conditions is satisfied:
\begin{enumerate}
\item $m=2$, $n=3$ and each end of $M^m$ is embedded.
\item $m\geq 3$. 
\end{enumerate}
\end{theorem}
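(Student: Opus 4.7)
The plan is to exploit the decay condition $\lim_{R\to\infty}\sup_{r(x)\geq R} r(x)\|A\|(x)=0$ to force each end of $M^m$ to be asymptotically a flat $m$-plane, and then to show that each such end contributes exactly $1$ to $\lim_{R\to\infty}\Q(R)$; summing the contributions will identify the limit with the number of ends $\E(M^m)$.

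The first step is to establish that $M^m$ has finite topological type and to produce an asymptotic graphical description of its ends. The hypothesis $r\|A\|\to 0$ means that on the region $\{r\geq R_0\}$ the second fundamental form is small at the scale of the distance to the origin, so Allard-type regularity together with the monotonicity of $\Q$ in (\ref{monotonicity}) yields, for $R_0$ large enough, a decomposition of $M^m\setminus B_{R_0}^{\erre^n}(0)$ into finitely many connected components $E_1,\dots,E_k$, each realized as a normal graph over an affine $m$-plane $\Pi_i$ of a function $u_i$ whose gradient satisfies $|\nabla u_i|(x)\to 0$ as $r(x)\to\infty$. The ends of $M^m$ are then in bijection with these components, so $\E(M^m)=k<\infty$.

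The second step is the density computation for each end. For $E_i$ viewed as a graph over $\Pi_i$, the induced volume form satisfies $dV_{E_i}=(1+o(1))\,dV_{\Pi_i}$ because $|\nabla u_i|\to 0$, and the slanted contribution outside $\Pi_i\cap B_R^{\erre^n}(0)$ is of lower order in $R$. A standard dominated-convergence argument then yields
\begin{equation*}
\lim_{R\to\infty}\frac{\Vol(E_i\cap B_R^{\erre^n}(0))}{\omega_m R^m}=1.
\end{equation*}
Adding the $k$ contributions and observing that $\Vol(M^m\cap B_{R_0}^{\erre^n}(0))$ is bounded and hence negligible once divided by $\omega_m R^m$, one obtains $\lim_{R\to\infty}\Q(R)=k=\E(M^m)$.

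The main obstacle is securing the \emph{single-sheeted} graphical representation of each end. The decay $r\|A\|\to 0$ alone guarantees that the tangent cone at infinity of $M^m$ is a union of $m$-planes, but a priori each end might cover its asymptotic plane with multiplicity greater than one. In case $(1)$ with $m=2$, $n=3$ and embedded ends, one rules out multi-sheeted behavior via the classical analysis of complete embedded minimal ends in $\erre^3$ in the spirit of Schoen and Jorge--Meeks, which is precisely why embeddedness and the codimension restriction appear. In case $(2)$ with $m\geq 3$, Allard regularity in higher dimension combined with minimality forces multiplicity one automatically, so embeddedness is unnecessary. Once multiplicity one is established the density computation above applies verbatim to each end, and the theorem follows.
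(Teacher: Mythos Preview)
The paper does not supply its own proof of this theorem: the statement is quoted verbatim from the literature (Chen~Qing \cite{Che4} and Gimeno--Palmer \cite{GPGap}) and is used only as a black box to derive the subsequent corollary. There is therefore nothing in the paper to compare your argument against.

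Your outline does follow the standard route taken in those references: show that the decay $r\|A\|\to 0$ forces the tangent cone at infinity to be a union of $m$-planes, upgrade this to a graphical description of each end over its limit plane, and compute that each end contributes density $1$ to $\Q$. That much is fine as a sketch.

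There is, however, a real inaccuracy in your handling of the multiplicity-one issue in case~(2). You write that ``Allard regularity in higher dimension combined with minimality forces multiplicity one automatically.'' Allard's theorem goes the other way: it upgrades a density hypothesis close to~$1$ to a graphical conclusion, it does not by itself exclude multi-sheeted ends. The actual reason the embeddedness assumption can be dropped when $m\geq 3$ is \emph{topological}: once the end is shown to be a local graph over (an exterior domain in) its limit plane $\Pi_i\cong\erre^m$, the projection is a covering map over a region whose fundamental group is that of $S^{m-1}$. For $m\geq 3$ the sphere $S^{m-1}$ is simply connected, so any connected cover is single-sheeted and the density is forced to equal~$1$. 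For $m=2$ this fails because $\pi_1(S^1)=\mathbb{Z}$, allowing genuine multi-sheeted minimal ends (the Enneper end being the prototype), and one must impose embeddedness of each end together with $n=3$ to invoke the Schoen/Jorge--Meeks classification of embedded minimal ends. If you replace your Allard sentence with this covering-space argument, the sketch becomes an honest outline of the proof in \cite{Che4}.
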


Hence, we can state the following corollary showing the relation between the number of ends and the lower bound for the heat kernel under the assumptions of the above theorem (see introduction of \cite{GriEnds} for a complete overview on the two sides estimates for the heat kernel on manifolds with ends):

\begin{corollary}
Let $M^m$ be an $m-$dimensional complete immersed minimal submanifold in $\erre^n$ which satisfies
\begin{equation}\label{decay}
\lim_{R\to\infty}\underset{r(x)\geq R}{\sup_{x\in M^m}}r(x)\Vert A\Vert (x)=0,
\end{equation}
and 
\begin{enumerate}
\item if  $m=2$ and $n=3$, each end of $M^m$ is embedded. Or,
\item $m\geq 3$. 
\end{enumerate}
Then, if the number of ends $\E(M^m)$ of $M^m$ is bounded from above by
\begin{equation}
\E(M^m)<\frac{1}{C_m},
\end{equation}
the heat kernel $\He$ of $M^m$ satisfies
\begin{equation}
\frac{\left(1-\E(M^m) C_m\right)^2}{\E(M^m)}\leq\limsup_{t\to\infty}\left(4\pi t\right)^\frac{m}{2}\He(t,x,x)\leq 1.
\end{equation} 
\end{corollary}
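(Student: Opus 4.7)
The plan is to observe that this corollary is essentially a direct concatenation of the Main Theorem with the Chen--Cheung--Leung type theorem quoted immediately above it, so the argument should be little more than a verification that the hypotheses of both results are in force.

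First I would note that the ambient space $\erre^n$ is a simply connected manifold with sectional curvatures $K_N\equiv 0\leq 0$, hence a Cartan--Hadamard manifold. The immersion is complete and minimal by hypothesis. Next, I would invoke the cited theorem: under the decay assumption (\ref{decay}) and either of the structural hypotheses (embeddedness of the ends in the $m=2$, $n=3$ case, or $m\geq 3$), we have the identification
\begin{equation*}
\lim_{R\to\infty}\Q(R)=\E(M^m).
\end{equation*}
In particular this limit is finite, so $M^m$ has polynomial volume growth in the sense of (\ref{def-poly}), and moreover the constant $\E$ appearing in that definition coincides with the topological invariant $\E(M^m)$.

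At this point the hypothesis $\E(M^m)<1/C_m$ is precisely the bound $\E<1/C_m$ required by the Main Theorem. Applying the Main Theorem to $M^m\hookrightarrow \erre^n$ with this identification of constants immediately yields
\begin{equation*}
\frac{\left(1-\E(M^m) C_m\right)^2}{\E(M^m)}\leq\limsup_{t\to\infty}\left(4\pi t\right)^\frac{m}{2}\He(t,x,x)\leq 1,
\end{equation*}
which is the claim.

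There is no real obstacle here: all the analytic work is absorbed into the Main Theorem, while the geometric/topological work of turning the extrinsic volume ratio into a count of ends is absorbed into the cited theorem. The only point that deserves a brief sentence is that the limit $\lim_{R\to\infty}\Q(R)$ exists and is finite (by monotonicity plus the cited identification), so that the inequality in (\ref{def-poly}) holds with $\E=\E(M^m)$, and hence that the $\E$ controlling the lower bound in the Main Theorem is exactly the number of ends.
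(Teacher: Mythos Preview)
Your proposal is correct and matches the paper's approach exactly: the paper presents this corollary without a separate proof, introducing it with ``Hence, we can state the following corollary\ldots'' immediately after the cited theorem, which signals precisely the concatenation you describe. The only minor point is that the Main Theorem requires properness of the immersion, which you do not explicitly check, but the paper itself does not address this either in stating the corollary.
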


If $M^2$ is a minimal surface in $\erre^3$, by the Gauss formula the second fundamental form is related with the Gaussian curvature $K_G$ of $M ^2$ by
\begin{equation}\label{Gauss}
K_G =- \frac{1}{2}\vert A \vert^2,
\end{equation}
in view of \cite[theorem 1.2]{MPR-removable} it seems that in the particular case of complete embedded minimal surfaces in $\erre^3$ if there exists a constant $C$ such that $\vert K_G\vert R^2\leq C$, then:
$$
\vert K_G\vert R^2\leq C\quad \rightarrow \int_{M^2}\vert K_G\vert <\infty \rightarrow  \lim_{R\to\infty}\underset{r(x)\geq R}{\sup_{x\in M^m}}r(x)\vert A\vert (x)=0.
$$
Hence, the condition given in equation (\ref{decay}) in the above corollary can be replaced in the particular case of complete embedded minimal surfaces in $\erre^3$ by
$$
\vert K_G\vert R^2\leq C.
$$
Recall also that a particular case when equality (\ref{decay}) holds is (see \cite{Che4}) when
$$
\int_{M^m}\vert A \vert^mdV<\infty
$$
i.e,. when the submanifold has finite scalar curvature (see also \cite{A1}).

Let us finally remark that

\begin{remark}
Given a manifold $M^n$ with non-negative Ricci  curvature $\text{Rc}>0$, Bishop-Gromov volume comparison theorem asserts that for any $o\in M^n$ the relative volume quotient $\frac{\Vol(B_R^{M^n}(o))}{\omega_nR^n}$ is non-increasing in the radius $R$ (being $B_R^{M^n}(o)$ the geodesic ball of radius $R$ centered at $o$). The relative volume quotient converges to  a non-negative number $\Theta$:
$$
\lim_{R\to \infty}\frac{\Vol(B_R^{M^n}(o))}{\omega_nR^n}=\Theta\geq 0.
$$ 
If $\Theta>0$, one says that the manifold $M^n$ has \emph{maximal volume growth}. 

P. Li  proved in \cite{Li86} (see also \cite{Xu2013}) that if $M^n$ has $\text{Rc}>0$ and maximal volume growth, then
\begin{equation}
\lim_{t\to\infty}\Vol\left(B^{M^n}_{\sqrt{t}}\left(y\right)\right)\He\left(t,x,y\right)=\omega_n\left(4\pi\right)^{-\frac{n}{2}}.
\end{equation}
Therefore
\begin{equation}\label{Li-result}
\lim_{t\to\infty}\left(4\pi t\right)^{\frac{n}{2}}\He(t,x,y)=\frac{1}{\Theta}.
\end{equation}

In some sense, our main theorem can be understood (partially) as a reverse of the Li's theorem because at least on dimension $2$, by the Gauss formula (equation (\ref{Gauss})), a submanifold properly and minimally immersed in a Cartan-Hadamard ambient manifold has non-positive sectional curvature (instead of $\text{Rc}>0$) and  because, by the monotonicity formula, the extrinsic quotient given in equation (\ref{monotonicity}) is non-decreasing (instead of non-increasing like the relative volume quotient). 

Despite of the weakness of the inequalities (\ref{result}) in comparison to equality (\ref{Li-result}) observe, however, that a non-negatively Ricci-curved manifold with maximal volume growth must have finite fundamental group (see \cite{Li86}) but that is not true for minimal submanifolds of a Cartan-Hadamard with polynomial volume growth (see for instance the singly periodic Scherk surface (figure \ref{example})). 
\end{remark}

The most well known examples of heat kernels of minimal submanifolds $M^m$ in the Euclidean space $\erre^n$ are when $M^m$ is a totally geodesic  submanifold $\erre^m$ in $\erre^n$. Observe that in that case $\E=1$ if $C_m$ were $0$ the inequality (\ref{result}) would be an exact equality. Therefore, it is a natural question to ask the following open question

\begin{open}
 Is it possible to improve the main theorem changing $C_m$ by $0$?
\end{open}

The structure of the paper is as follows

In \S 2 we recall the definition and several properties of the heat kernel on a Riemannian manifold and provide proposition \ref{completeness} which states that every complete minimal submanifold with polynomial volume growth is stochastically complete. With those previous requirements we can, in \S 3, to prove the main theorem.
\section{Preliminaries}
Let $M$ be a Riemannian manifold with (possibly empty) smooth boundary $\partial M$, and denote by $\Delta$ the Laplacian on $M$. The heat kernel on $M$ is a function $\He(t,x,y)$ on $(0,\infty)\times M\times M$ which is the minimal positive fundamental solution to the heat equation
\begin{equation}
\frac{\partial v}{\partial t}=\Delta v\quad.
\end{equation} 

In other words, the Cauchy problem with Dirichlet boundary conditions
\begin{equation}
\begin{cases}
\frac{\partial v}{\partial t}=\Delta v\quad,\\
v\vert_{t=0}=v_0(x)\quad,
\end{cases}
\end{equation}
has a solution 
\begin{equation}
v(x,t)=\int_{M}\He(t,x,y)v_0(y)d\mu_y\quad,
\end{equation}
provided that $v_0$ is a bounded continuous positive function. 
Moreover the heat kernel has the following properties:
\begin{enumerate}
\item Symmetry in $x,y$ that is $\He(t,x,y)=\He(t,y,x)$.
\item The semigroup identity: for any $s\in(0,t)$
\begin{equation}\label{semigroup}
\He(t,x,y)=\int_{M}\He(s,x,z)\He(t-s,z,y)d\text{V}(z).
\end{equation}
\item For all $t>0$ and $x\in M$,
\begin{equation}
\int_{M}\He(t,x,y)d\text{V}(y)\leq 1.
\end{equation}
\end{enumerate}
If $M$ is the Euclidean space $\erre^n$ then, due to the homogeneity and isotropy of the Euclidean space, the heat kernel $\He^{n,0}(t,x,y)$ depends only on $t$ and $\rho(x,y)=\text{dist}(x,y)$, and is given by the classical formula
\begin{equation}
\He^{n,0}(t,\rho(x,y))=\frac{1}{(4\pi t)^{\frac{n}{2}}}e^{-\frac{\rho^2(x,y)}{4t}}\quad.
\end{equation}

A manifold $M$ satisfying for all $x\in M$ and all $t>0$

\begin{equation}
\int_{M}\He(t,x,y)d\text{V}(y)=1,
\end{equation}
is said to be stochastically complete. 

In the following proposition is proved that a complete submanifold of polynomial volume growth is stochastically complete

\begin{proposition}\label{completeness}Let $M^m$ be a $m$-dimensional complete minimal submanifold properly immersed in a Cartan-Hadamard ambient manifold. Suppose that $M^m$ is of polynomial volume growth, then  $M^m$ is stochastically complete  
\end{proposition}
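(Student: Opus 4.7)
The plan is to invoke Grigor'yan's volume test for stochastic completeness, which states that a geodesically complete Riemannian manifold $M$ is stochastically complete whenever
\begin{equation*}
\int^{\infty}\frac{r\,dr}{\log \mathrm{Vol}\bigl(B^{M}_{r}(p)\bigr)}=\infty,
\end{equation*}
where $B^{M}_{r}(p)$ denotes the intrinsic geodesic ball of radius $r$ centered at $p\in M$. The goal is therefore to bound the intrinsic volume growth of $M^m$ polynomially and then observe that a polynomial function yields a divergent integral.

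The first step is to relate the intrinsic balls (appearing in Grigor'yan's criterion) to the extrinsic balls (controlled by the polynomial volume growth hypothesis). Since $M^m$ is isometrically immersed in $N$, for every pair of points $x,y\in M^m$ one has $\mathrm{dist}_{N}(x,y)\leq \mathrm{dist}_{M}(x,y)$, hence
\begin{equation*}
B^{M}_{r}(p)\subseteq M^{m}\cap B^{N}_{r}(p),
\end{equation*}
and consequently $\mathrm{Vol}\bigl(B^{M}_{r}(p)\bigr)\leq \mathrm{Vol}\bigl(M^{m}\cap B^{N}_{r}(p)\bigr)$. By hypothesis $M^{m}$ has polynomial volume growth, so combining this with the monotonicity of $\mathcal{Q}$ from \eqref{monotonicity} yields, for every $r>0$,
\begin{equation*}
\mathrm{Vol}\bigl(B^{M}_{r}(p)\bigr)\leq \mathcal{E}\,\omega_{m}\,r^{m}.
\end{equation*}

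With this polynomial bound in hand, the rest is an immediate computation: for $r$ large enough one has $\log \mathrm{Vol}\bigl(B^{M}_{r}(p)\bigr)\leq m\log r + \log(\mathcal{E}\omega_{m})$, so
\begin{equation*}
\int^{\infty}\frac{r\,dr}{\log \mathrm{Vol}\bigl(B^{M}_{r}(p)\bigr)}\geq \int^{\infty}\frac{r\,dr}{m\log r + \log(\mathcal{E}\omega_{m})}=\infty,
\end{equation*}
and Grigor'yan's test concludes the proof. The only real subtlety is geodesic completeness of $M^m$ needed for Grigor'yan's theorem, but this is guaranteed by the hypothesis that $M^{m}$ is complete and properly immersed. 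The extrinsic-to-intrinsic volume comparison is the one step that must be stated carefully, since the polynomial bound \eqref{def-poly} is phrased in terms of extrinsic balls while the criterion is intrinsic; fortunately the inclusion goes the right way for free.
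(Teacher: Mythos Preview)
Your proof is correct and follows essentially the same route as the paper: bound the intrinsic ball by the extrinsic one via $B^{M}_{r}(p)\subseteq M^{m}\cap B^{N}_{r}(p)$, use the polynomial volume growth hypothesis to get $\mathrm{Vol}(B^{M}_{r}(p))\leq \mathcal{E}\,\omega_{m}\,r^{m}$, and then apply Grigor'yan's volume criterion to conclude stochastic completeness. The paper's argument is identical in structure, only slightly more terse.
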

\begin{proof}
Since $M^m$ has polynomial volume growth by equation (\ref{def-poly}), for any $o\in M$ and any $R\in \erre_+$ we have
\begin{equation}
\Vol(M^m\cap B_R^N(o))\leq \E \omega_mR^m.
\end{equation}
But since the  geodesic ball $B_R^{M^m}(o)$ of radius $R$ in $M^m$ is a subset of the extrinsic ball $M^m\cap B_R^N(o)$, one obtains that
\begin{equation}
\begin{aligned}
\int^\infty \frac{rdr}{\log\left(\Vol(B_r^{M^m}(o))\right)}&\geq \int^\infty \frac{rdr}{\log\left(\Vol(M^m\cap B_r^N(o))\right)}\\
&\geq \int^\infty \frac{rdr}{\log\left( \E \omega_mr^m\right)} =\infty.
\end{aligned}
\end{equation}

Hence, by \cite[theorem 9.1]{GriExp} $M^m$ is stochastically complete.
\end{proof}

Finally in order to conclude this preliminary section let us recall here the coarea formula
\begin{theorem}[Coarea formula, see \cite{Sakai,Chavel}]\label{coarea}
Let $f$ be a proper $C^\infty$ function defined on a Riemannian manifold $(M^n,g)$. Now we set
\begin{equation}
\begin{aligned}
\Omega_t:=\left\{p\in M;\, f(p)<t \right\},&\quad \text{V}_t:=\Vol(\Omega_t),\\
\Gamma_t:=\left\{p\in M;\, f(p)=t \right\},&\quad \text{A}_t:=\Vol_{n-1}(\Gamma_t).
\end{aligned}
\end{equation}
Then for an integrable function $u$ on $M^n$ the following hold:
\begin{enumerate}
\item Let $g_t$ be the induced metric on $\Gamma_t$ from $g$. Then
\begin{equation}
\int_{M^n} u\vert \nabla f\vert d\nu_g=\int_{-\infty}^\infty dt\int_{\Gamma_t}ud\nu_{g_t}.
\end{equation}
\item $t\to \text{V}_t$ is of class $C^\infty$ at a regular value $t$ of $f$ such that $\text{V}_t<+\infty$, and
\begin{equation}
\frac{d}{dt}\text{V}_t=\int_{\Gamma_t}\frac{1}{\vert\nabla f\vert}d\nu_{g_t}.
\end{equation}
\end{enumerate}
\end{theorem}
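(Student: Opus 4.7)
The strategy is the classical one: localize near regular points using Sard's theorem, use adapted coordinates in which $f$ itself is a coordinate, and apply Fubini. Part (2) then follows from (1) by a cutoff argument once one controls $|\nabla f|^{-1}$ on regular level sets.

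First I would handle the critical set. By Sard's theorem the set $C \subset \erre$ of critical values of $f$ has Lebesgue measure zero; moreover the integrand $u|\nabla f|$ vanishes on $f^{-1}(C)$ (to the extent it matters, since $\nabla f = 0$ at critical points, and the measure-zero contribution of regular points in $f^{-1}(C)$ is absorbed). So both sides of the identity in (1) are unchanged if we restrict to the open set $M^n \setminus f^{-1}(C)$, on which $f$ is a submersion.

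Next I would prove (1) locally. Around a regular point $p$, the implicit function theorem gives coordinates $(x^1,\dots,x^{n-1},x^n)$ on a neighborhood $U$ with $f(x) = x^n$. In such a chart the co-metric satisfies $g^{nn} = |\nabla f|^2$, and a direct block-matrix computation of the volume form (using that $dx^n = df$ is dual to a vector of length $|\nabla f|^{-1}$ normal to the level sets modulo tangential components) yields the pointwise identity
\begin{equation*}
d\nu_g \;=\; \frac{1}{|\nabla f|}\, d\nu_{g_t}\, dt.
\end{equation*}
Multiplying by $u|\nabla f|$ and invoking Fubini gives the coarea identity on $U$. A partition of unity subordinate to a cover of $M^n \setminus f^{-1}(C)$ by such charts, combined with the Sard step above, establishes (1) globally for integrable $u$.

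For (2), fix a regular value $t_0$. Since $f$ is proper, $\Gamma_{t_0}$ is compact; since $|\nabla f|$ is continuous and nonzero on $\Gamma_{t_0}$, it is bounded below on a neighborhood $f^{-1}(I)$ for some open interval $I = (t_0 - \delta, t_0 + \delta)$ of regular values. Applying (1) with $u = |\nabla f|^{-1}\mathbf{1}_{\Omega_t \cap f^{-1}(I)}$ — integrable thanks to this lower bound and the finiteness $V_t < +\infty$ — produces, for $t \in I$,
\begin{equation*}
V_t - V_{t_0 - \delta/2} \;=\; \int_{t_0 - \delta/2}^{t}\!\int_{\Gamma_s}\frac{1}{|\nabla f|}\, d\nu_{g_s}\, ds.
\end{equation*}
The fundamental theorem of calculus then gives the stated formula for $dV_t/dt$. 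The $C^\infty$ regularity in $t$ on $I$ comes from the smooth dependence of the level sets $\Gamma_s$ on $s$, realized explicitly via the flow of the vector field $\nabla f / |\nabla f|^2$ (which sends $\Gamma_{t_0}$ diffeomorphically to $\Gamma_s$ for $s \in I$ and pulls back $|\nabla f|^{-1}\, d\nu_{g_s}$ to a smooth family of forms on $\Gamma_{t_0}$), allowing differentiation under the integral to all orders.

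\emph{Main obstacle.} The only nontrivial analytic point is justifying the local volume-form identity $d\nu_g = |\nabla f|^{-1}\, d\nu_{g_t}\, dt$ with the correct normalization — specifically, checking that in coordinates where $x^n = f$ one genuinely has $\sqrt{\det g_{ij}} = |\nabla f|^{-1}\sqrt{\det h_{ij}}$, where $h_{ij}$ is the induced metric on the level slice. Everything else is bookkeeping: Sard's theorem for critical values, partitions of unity to globalize, and properness of $f$ to make the slab argument for (2) work with an integrable $u$.
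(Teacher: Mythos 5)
The paper gives no proof of this theorem at all --- it is quoted as a standard result with a citation to \cite{Sakai,Chavel} --- so the only comparison available is with those references, and your argument is exactly the standard proof found there: Sard's theorem to discard critical values, adapted coordinates in which the Schur-complement identity $\det g = \det g_t/|\nabla f|^2$ gives $d\nu_g = |\nabla f|^{-1}d\nu_{g_t}\,dt$, Fubini plus a partition of unity, and for part (2) the slab argument combined with the flow of $\nabla f/|\nabla f|^2$ to get smoothness in $t$. The sketch is correct (modulo the harmless mismatch between $t_0-\delta$ and $t_0-\delta/2$ in the slab step), so there is nothing to object to.
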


\section{Proof of the main theorem}
First of all, let us denote by $D_R(x)$ the extrinsic ball of radius $R$ cantered at $x$, i.e.,
$$
D_R(x):=M^m\cap B_R^N(x),
$$
therefore $\Q(R)$ is given by
$$
\Q(R)=\frac{\Vol(D_R(x))}{\omega_mR^m}.
$$
Note that $D_R(x)$ is the sublevel set of the extrinsic distance function $r_x$:
\begin{equation}
D_R(x)=\left\{p\in M^m;\, r_x(p)<R\right\}.
\end{equation}

Making use of the upper bounds for the heat kernel (equation \ref{upper}) and the semigroup property of the heat kernel (equation \ref{semigroup})
\begin{equation}
\begin{aligned}
1\geq \left(4\pi t\right)^\frac{m}{2}\He(t,x,x)&=\left(4\pi t\right)^\frac{m}{2}\int_{M^m}\He(t/2,x,y)^2d\text{V}(y)\\
&\geq \left(4\pi t\right)^\frac{m}{2}\int_{D_R(x)}\He(t/2,x,y)^2d\text{V}(y),
\end{aligned}
\end{equation}
for any extrinsic ball $D_R(x)$. Applying now the Cauchy–Schwarz inequality
\begin{equation}
\begin{aligned}
1\geq \left(4\pi t\right)^\frac{m}{2}\He(t,x,x)&\geq \left(4\pi t\right)^\frac{m}{2}\frac{\left(\int_{D_R(x)}\He(t/2,x,y)d\text{V}(y)\right)^2}{\Vol(D_R(x))},
\end{aligned}
\end{equation}
 Since by proposition \ref{completeness} $M^m$ is stochastically complete
\begin{equation}
\begin{aligned}
1\geq \left(4\pi t\right)^\frac{m}{2}\He(t,x,x)&\geq \left(4\pi t\right)^\frac{m}{2}\frac{\left(1-\int_{M^m\setminus D_R(x)}\He(t/2,x,y)d\text{V}(y)\right)^2}{\Vol(D_R(x))},
\end{aligned}
\end{equation}
Applying the polynomial volume growth property
\begin{equation}
\begin{aligned}
1\geq \left(4\pi t\right)^\frac{m}{2}\He(t,x,x)&\geq \left(4\pi t\right)^\frac{m}{2}\frac{\left(1-\int_{M^m\setminus D_R(x)}\He(t/2,x,y)d\text{V}(y)\right)^2}{\E \omega_mR^m},
\end{aligned}
\end{equation}
for all $R>0$. If we choose
\begin{equation}\label{Rt}
R=R_t:=\frac{\left(4\pi\right)^{\frac{1}{2}}}{\omega_m^{\frac{1}{m}}}t^{\frac{1}{2}}=2 \left[\frac{m}{2}\Gamma\left(\frac{m}{2}\right)\right]^{\frac{1}{m}}t^{\frac{1}{2}},
\end{equation}
we obtain

\begin{equation}\label{abans-subs}
\begin{aligned}
1\geq \left(4\pi t\right)^\frac{m}{2}\He(t,x,x)&\geq \frac{\left(1-\int_{M^m\setminus D_{R_t}(x)}\He(t/2,x,y)d\text{V}(y)\right)^2}{\E},
\end{aligned}
\end{equation}

We need now the following proposition

\begin{proposition}
Suppose that 
$$
\lim_{R\to \infty}\Q(R)=\E
$$
then 
\begin{equation}
\int_{M^m\setminus D_{R_t}(x)}\He(t/2,x,y)d\text{V}(y)\leq \E \left(C_m+\delta(t)\right) , 
\end{equation}
being $\delta$ a smooth function with $\delta\to 0$ when  $t\to \infty$.
\end{proposition}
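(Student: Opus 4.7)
The plan is to apply Markvorsen's Gaussian upper bound~\eqref{Mar} at time $t/2$, convert the resulting exterior integral into a one-dimensional Stieltjes integral in the extrinsic radius via the coarea formula (Theorem~\ref{coarea}), integrate by parts, and then insert the polynomial growth bound together with the specific normalisation of $R_t$ from~\eqref{Rt} in order to identify the constant $C_m$. Concretely, I would start from
\begin{equation*}
\int_{M^m\setminus D_{R_t}(x)} \He(t/2,x,y)\,d\text{V}(y)\leq (2\pi t)^{-m/2}\int_{M^m\setminus D_{R_t}(x)} e^{-r_x(y)^2/(2t)}\,d\text{V}(y).
\end{equation*}
Writing $V(s):=\Vol(D_s(x))$, and using that $|\nabla^M r_x|\leq 1$ almost everywhere (so the coarea formula reduces radial integrands to Stieltjes integrals against $V$), I would obtain
\begin{equation*}
\int_{M^m\setminus D_R(x)} e^{-r_x^2/(2t)}\,d\text{V}=\int_R^\infty e^{-s^2/(2t)}\,V'(s)\,ds.
\end{equation*}
An integration by parts (the boundary term at infinity vanishes by polynomial growth) followed by the bound $V(s)\leq\E\,\omega_m s^m$ in the surviving integral gives the upper estimate
\begin{equation*}
-e^{-R^2/(2t)}V(R)+\E\,\omega_m\int_R^\infty \frac{s^{m+1}}{t}\,e^{-s^2/(2t)}\,ds,
\end{equation*}
and the substitution $u=s^2/(2t)$ identifies the remaining integral as $2^{m/2}\,t^{m/2}\,\Gamma\!\left(\tfrac{m}{2}+1,\tfrac{R^2}{2t}\right)$.

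Setting $R=R_t$ has the crucial consequences that $R_t^2/(2t)=\alpha_m:=2[(m/2)\Gamma(m/2)]^{2/m}$ and $\omega_m R_t^m=(4\pi t)^{m/2}$. Using $V(R_t)=\Q(R_t)\,\omega_m R_t^m$ together with $\omega_m=\pi^{m/2}/\Gamma(m/2+1)$, and dividing by $(2\pi t)^{m/2}$, I would arrive at
\begin{equation*}
\int_{M^m\setminus D_{R_t}(x)}\He(t/2,x,y)\,d\text{V}(y)\leq -2^{m/2}e^{-\alpha_m}\,\Q(R_t)+\E\,\frac{\Gamma(m/2+1,\alpha_m)}{\Gamma(m/2+1)}.
\end{equation*}
The recurrence $\Gamma(a+1,z)=a\,\Gamma(a,z)+z^{a}e^{-z}$ together with the identity $\alpha_m^{m/2}=2^{m/2}(m/2)\Gamma(m/2)$ --- which is precisely the reason for the normalisation in~\eqref{Rt} --- yields $\Gamma(m/2+1,\alpha_m)/\Gamma(m/2+1)=C_m+2^{m/2}e^{-\alpha_m}$. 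Substituting and rearranging produces
\begin{equation*}
\int_{M^m\setminus D_{R_t}(x)}\He(t/2,x,y)\,d\text{V}(y)\leq \E\,C_m+2^{m/2}e^{-\alpha_m}\bigl(\E-\Q(R_t)\bigr).
\end{equation*}
Since $R_t\to\infty$ as $t\to\infty$ and $\Q(R)\to\E$, the last term defines the required quantity $\E\,\delta(t)$ with $\delta(t)\to 0$ (one may mollify in $t$ if smoothness of $\delta$ is to be taken strictly).

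The delicate point is the retention of the boundary term $-e^{-R_t^2/(2t)}V(R_t)$: simply dropping it, which is a valid upper bound because it is nonpositive, leaves only the weaker estimate $\E\bigl(C_m+2^{m/2}e^{-\alpha_m}\bigr)$, an excess that would be fatal to the main theorem. Keeping this term and recognising that, because $\Q(R_t)\to\E$, it asymptotically cancels the surplus $\E\cdot 2^{m/2}e^{-\alpha_m}$ coming from the conversion $\Gamma(m/2+1,\alpha_m)\to C_m\Gamma(m/2+1)$, is exactly what forces the constant to be $C_m$ rather than something strictly larger; the arithmetic identity $\alpha_m^{m/2}=2^{m/2}(m/2)\Gamma(m/2)$ built into the choice~\eqref{Rt} is what makes the two surpluses match precisely.
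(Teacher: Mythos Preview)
Your argument is correct and reaches the same conclusion as the paper, but the middle step is organised differently. After reducing to the one–dimensional integral $\int_{R_t}^\infty e^{-s^2/(2t)}V'(s)\,ds$ via Markvorsen's bound and coarea (identical to the paper), the paper does \emph{not} integrate by parts: instead it expands
\[
V'(s)=m\omega_m\,\Q(s)\,s^{m-1}+\omega_m s^m\,\Q(s)\bigl(\log\Q(s)\bigr)',
\]
bounds $\Q(s)\le\E$ in both terms, and controls the second term crudely by $\bigl(\sup_{s\ge 0}e^{-s^2/(2t)}s^m\bigr)\int_{R_t}^\infty(\log\Q)'\,ds=(mt)^{m/2}e^{-m/2}\log\bigl(\E/\Q(R_t)\bigr)$. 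This yields the same constant $C_m$ from the first term and an error $\delta(t)=\dfrac{e^{-m/2}(m/2)^{m/2-1}}{\Gamma(m/2)}\log\bigl(\E/\Q(R_t)\bigr)$.

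Your route---integrate by parts, retain the boundary term $-e^{-\alpha_m}V(R_t)$, bound $V(s)\le\E\omega_m s^m$ in the bulk integral, and use the incomplete–gamma recurrence to convert $\Gamma(m/2+1,\alpha_m)$ to $C_m$---is cleaner and gives the sharper-looking remainder $\delta(t)=2^{m/2}e^{-\alpha_m}\bigl(\E-\Q(R_t)\bigr)/\E$, linear rather than logarithmic in the defect $\E-\Q(R_t)$. Your observation that dropping the boundary term would leave a strictly larger constant $C_m+2^{m/2}e^{-\alpha_m}$ is exactly right, and corresponds in the paper's argument to keeping the $(\log\Q)'$ term rather than discarding it. Either way, the normalisation $\alpha_m^{m/2}=2^{m/2}(m/2)\Gamma(m/2)$ built into~\eqref{Rt} is the mechanism that produces $C_m$ precisely.
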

\begin{proof}
By inequality (\ref{Mar})
\begin{equation}
\begin{aligned}
\int_{M^m\setminus D_{R_t}(x)}\He(t/2,x,y)d\text{V}(y)&\leq \int_{M^m\setminus D_{R_t}(x)}\He^{m,0}(t/2,r_x(y))d\text{V}(y)
\end{aligned}
\end{equation}
by coarea formula (theorem \ref{coarea})
\begin{equation}
\begin{aligned}
\int_{M^m\setminus D_{R_t}(x)}\He(t/2,x,y)d\text{V}(y)\leq & \int_{R_t}^\infty\int_{\partial D_S(x)}\frac{\He^{m,0}(t/2,r_x(y))}{\vert \nabla r_x\vert}d\text{V}_{s}(y)ds\\
\leq & \int_{R_t}^\infty\He^{m,0}(t/2,s)\left(\Vol(D_s(x)\right)'ds.
\end{aligned}
\end{equation}
The derivative $\frac{d}{dR}\Vol(D_R(o))=\left(\Vol(D_R)\right)'$ satisfies
\begin{equation}
\left(\Vol(D_R)\right)'=m\omega_m\Q(R)R^{m-1}+\omega_mR^m \Q(R)\left(\log(\Q(R)\right)'.
\end{equation}
Therefore,
\begin{equation}
\begin{aligned}
&\int_{{M^m\setminus D_{R_t}(x)}}\He(t/2,x,y)d\text{V}(y)\leq\\
&\frac{\omega_m}{(2\pi t)^{\frac{m}{2}}}\int_{R_t}^\infty e^{-\frac{s^2}{2t}}\left[ m\Q(s)s^{m-1}+s^m \Q(s)\left(\log(\Q(s)\right)'\right]ds\leq\\
&  \frac{\omega_m\E}{(2\pi t)^{\frac{m}{2}}}\int_{R_t}^\infty e^{-\frac{s^2}{2t}}\left[m s^{m-1}+s^m\left(\log(\Q(s)\right)'\right]ds \leq\\
& \frac{\omega_m\E}{(2\pi t)^{\frac{m}{2}}}\left[\int_{R_t}^\infty me^{-\frac{s^2}{2t}}s^{m-1}ds+\left(\sup_{s\in[0,\infty)}e^{-\frac{s^2}{2t}}s^m\right)\log\left(\frac{\E}{\Q(R_t)}\right)\right]=\\
 &\frac{\omega_m\E}{(2\pi)^{\frac{m}{2}}}\left[m2^{\frac{m}{2}-1}\Gamma(\frac{m}{2},\frac{{R_t}^2}{2t})+e^{-\frac{m}{2}}m^{\frac{m}{2}}\log\left(\frac{\E}{\Q(R_t)}\right)\right] .
\end{aligned}
\end{equation}
Taking into account the definition of $R_t$ (equation (\ref{Rt})) and that $\omega_m=\frac{2\pi^{\frac{m}{2}}}{m\Gamma(\frac{m}{2})}$,

\begin{equation}
\begin{aligned}
&\int_{M^m\setminus D_{R_t}(x)}\He(t/2,x,y)d\text{V}(y)\leq  \E \left[C_m+\delta(t)\right] ,
\end{aligned}
\end{equation}
where
$$
\delta(t):=\frac{e^{-\frac{m}{2}}(\frac{m}{2})^{\frac{m}{2}-1}}{\Gamma(\frac{m}{2})}\log\left(\frac{\E}{\Q\left(2 \left(\frac{m}{2}\Gamma\left(\frac{m}{2}\right)\right)^{\frac{1}{m}}t^{\frac{1}{2}}\right)}\right).
$$
Making use that $\Q(s)=\E$ when $s\to\infty$ the proposition is proven.
\end{proof}

Hence for $t$ large enough  we can apply the above proposition in equation (\ref{abans-subs}) 
\begin{equation}
\begin{aligned}
1\geq \left(4\pi t\right)^\frac{m}{2}\He(t,x,x)&\geq \frac{\left[1- \E \left(C_m+\delta(t)\right)\right]^2}{\E}.
\end{aligned}
\end{equation}
Therefore, taking limits the theorem follows.


\def\cprime{$'$} \def\cprime{$'$} \def\cprime{$'$} \def\cprime{$'$}
  \def\cprime{$'$}
\providecommand{\bysame}{\leavevmode\hbox to3em{\hrulefill}\thinspace}
\providecommand{\MR}{\relax\ifhmode\unskip\space\fi MR }
\providecommand{\MRhref}[2]{%
  \href{http://www.ams.org/mathscinet-getitem?mr=#1}{#2}
}
\providecommand{\href}[2]{#2}


\begin{thebibliography}{MPR13}

\bibitem[And84]{A1}
Michael~T. Anderson, \emph{The compactification of a minimal submanifold in
  euclidean space by the gauss map}, unpublished preprint, 1984.

\bibitem[Cha84]{Chavel}
Isaac Chavel, \emph{Eigenvalues in {R}iemannian geometry}, Pure and Applied
  Mathematics, vol. 115, Academic Press Inc., Orlando, FL, 1984, Including a
  chapter by Burton Randol, With an appendix by Jozef Dodziuk. \MR{768584
  (86g:58140)}

\bibitem[CLY84]{Ch-Li-Yau}
Shiu~Yuen Cheng, Peter Li, and Shing-Tung Yau, \emph{Heat equations on minimal
  submanifolds and their applications}, Amer. J. Math. \textbf{106} (1984),
  no.~5, 1033--1065. \MR{761578 (85m:58171)}

\bibitem[GP12]{GPGap}
Vicent Gimeno and Vicente Palmer, \emph{Volume growth, number of ends, and the
  topology of a complete submanifold}, Journal of Geometric Analysis (2012),
  1--22 (English).

\bibitem[GP13]{GP}
\bysame, \emph{Extrinsic isoperimetry and compactification of minimal surfaces
  in {E}uclidean and hyperbolic spaces}, Israel J. Math. \textbf{194} (2013),
  no.~2, 539--553. \MR{3047082}

\bibitem[Gri99]{GriExp}
Alexander Grigor{\cprime}yan, \emph{Analytic and geometric background of
  recurrence and non-explosion of the {B}rownian motion on {R}iemannian
  manifolds}, Bull. Amer. Math. Soc. (N.S.) \textbf{36} (1999), no.~2,
  135--249. \MR{1659871 (99k:58195)}

\bibitem[GSC09]{GriEnds}
Alexander Grigor'yan and Laurent Saloff-Coste, \emph{Heat kernel on manifolds
  with ends}, Ann. Inst. Fourier (Grenoble) \textbf{59} (2009), no.~5,
  1917--1997. \MR{2573194 (2011d:35499)}

\bibitem[JM83]{JM}
Luqu{\'e}sio~P. Jorge and William~H. Meeks, III, \emph{The topology of complete
  minimal surfaces of finite total {G}aussian curvature}, Topology \textbf{22}
  (1983), no.~2, 203--221. \MR{683761 (84d:53006)}

\bibitem[Li86]{Li86}
Peter Li, \emph{Large time behavior of the heat equation on complete manifolds
  with nonnegative {R}icci curvature}, Ann. of Math. (2) \textbf{124} (1986),
  no.~1, 1--21. \MR{847950 (87k:58259)}

\bibitem[Mar86]{Mar}
Steen Markvorsen, \emph{On the heat kernel comparison theorems for minimal
  submanifolds}, Proc. Amer. Math. Soc. \textbf{97} (1986), no.~3, 479--482.
  \MR{840633 (87i:58160)}

\bibitem[MP12]{MP-2012}
William~H. Meeks, III and Joaqu{\'{\i}}n P{\'e}rez, \emph{A survey on classical
  minimal surface theory}, University Lecture Series, vol.~60, American
  Mathematical Society, Providence, RI, 2012. \MR{3012474}

\bibitem[MPR13]{MPR-removable}
William~H. Meeks, III, Joaqu{\'{\i}}n P{\'e}rez, and Antonio Ros, \emph{Local
  removable singularity theorems for minimal laminations}, 2013,
  arXiv:1308.6439.

\bibitem[MW07]{Meeks2007}
William~H. Meeks, III and Michael Wolf, \emph{Minimal surfaces with the area
  growth of two planes: the case of infinite symmetry}, J. Amer. Math. Soc.
  \textbf{20} (2007), no.~2, 441--465. \MR{2276776 (2007m:53008)}

\bibitem[Oss86]{Oss}
Robert Osserman, \emph{A survey of minimal surfaces}, second ed., Dover
  Publications Inc., New York, 1986. \MR{852409 (87j:53012)}

\bibitem[Pal99]{Palmer}
Vicente Palmer, \emph{Isoperimetric inequalities for extrinsic balls in minimal
  submanifolds and their applications}, J. London Math. Soc. (2) \textbf{60}
  (1999), no.~2, 607--616. \MR{1724821 (2000j:53050)}

\bibitem[Qin95]{Che4}
Chen Qing, \emph{On the volume growth and the topology of complete minimal
  submanifolds of a euclidean space}, J. Math. Sci. Univ. Tokyo \textbf{2}
  (1995), 657--669.

\bibitem[Sak96]{Sakai}
Takashi Sakai, \emph{Riemannian geometry}, Translations of Mathematical
  Monographs, vol. 149, American Mathematical Society, Providence, RI, 1996,
  Translated from the 1992 Japanese original by the author. \MR{1390760
  (97f:53001)}

\bibitem[Xu13]{Xu2013}
Guoyi Xu, \emph{Large time behavior of the heat kernel}, 2013, arXiv:1310.2382.

\end{thebibliography}
\end{document}